\let\@secnumfont\bfseries
\def\section{\@startsection{section}{1}%
  \z@{4\linespacing\@plus\linespacing}{\linespacing}%
  {\bfseries\centering}}
\def\introsection{\@startsection{section}{1}%
  \z@{3\linespacing\@plus\linespacing}{\linespacing}%
  {\bfseries\centering}}
\def\subsection{\@startsection{subsection}{2}%
   \z@{1.25\linespacing\@plus.7\linespacing}{.5\linespacing}%
   {\normalfont\bfseries}}
\def\subsectionsinline{\def\subsection{\@startsection{subsection}{2}%
  \z@{1\linespacing\@plus.7\linespacing}{-.5em}%
  {\normalfont\bfseries}}}
\theoremstyle{definition}
\newtheorem*{definition*}{Definition}
\newtheorem*{example*}{Example}
\newtheorem*{problem*}{Problem}
\newtheorem*{exercise*}{Exercise}
\newtheorem*{question*}{\color{blue}Question}
\newtheorem*{construction*}{Construction}
\theoremstyle{remark}
\newtheorem{remark}[equation]{Remark}
\newtheorem*{note*}{Note}
\newtheorem*{notation*}{Notation}
\newtheorem*{remark*}{Remark}
\newtheorem*{data*}{Data}
\theoremstyle{plain}
\newtheorem{theorem}[equation]{Theorem}
\newtheorem{lemma}[equation]{Lemma}
\newtheorem{proposition}[equation]{Proposition}
\newtheorem*{theorem*}{Theorem}
\newtheorem*{corollary*}{Corollary}
\newtheorem*{lemma*}{Lemma}
\newtheorem*{proposition*}{Proposition}
\newtheorem*{conjecture*}{Conjecture}
\newtheorem*{claim*}{Claim}
\newtheorem*{proposal*}{Proposal}
\newtheorem*{conclusion*}{Conclusion}
\newtheorem*{hypothesis*}{Hypothesis}
\newtheorem*{assumption*}{Assumption}
\numberwithin{equation}{section}
\definecolor{refkey}{rgb}{0,.6,.4}
\renewcommand{\:}{\colon}
\newcommand{\Ahat}{{\hat A}}
\DeclareMathOperator{\Hom}{Hom}
\DeclareMathOperator{\pt}{pt}
\newcommand{\RR}{{\mathbb R}}
\DeclareMathOperator{\tr}{tr}
\newcommand{\chiup}{\raise.5ex\hbox{$\chi$}}
\DeclareRobustCommand{\mstrut}{^{\vphantom{1*\prime y\vee M}}}
\newcommand{\temsquare}{\raise3.5pt\hbox{\boxed{ }}}
\definecolor{refkey}{rgb}{0,.8,.2}\definecolor{labelkey}{rgb}{1,0,0} 
\newcommand{\raisemath}[1]{\mathpalette{\raisem@th{#1}}}
\newcommand{\raisem@th}[3]{\raisebox{#1}{$#2#3$}}
\DeclareMathOperator{\Det}{Det}
\DeclareMathOperator{\Sym}{Sym}
\newcommand{\BNG}{B^{}_{\nabla }G}
\newcommand{\ENG}{E^{}_{\nabla }G}
\newcommand{\Man}{\mathbf{Man}}
\newcommand{\PG}{(P_G)_\nabla }
\newcommand{\PQ}{P_Q}
\newcommand{\SG}{(S_G)_\nabla }
\newcommand{\TG}{\Theta \mstrut _G}
\newcommand{\XG}{(X_G)_\nabla }
\newcommand{\XQ}{X_Q}
\newcommand{\SQ}{S_Q}
\newcommand{\comp}[2]{\Pi _{\raisemath{-2pt}{#1}}^{\raisemath{2pt}{#2}}}
\newcommand{\hx}{\tilde\xi }
\newcommand{\sF}{\mathcal{F}}
\begin{document}

\abovedisplayskip18pt plus4.5pt minus9pt
\belowdisplayskip \abovedisplayskip
\abovedisplayshortskip0pt plus4.5pt
\belowdisplayshortskip10.5pt plus4.5pt minus6pt
\baselineskip=15 truept
\marginparwidth=55pt

\makeatletter
\renewcommand{\tocsection}[3]{%
  \indentlabel{\@ifempty{#2}{\hskip1.5em}{\ignorespaces#1 #2.\;\;}}#3}
\renewcommand{\tocsubsection}[3]{%
  \indentlabel{\@ifempty{#2}{\hskip 2.5em}{\hskip 2.5em\ignorespaces#1%
    #2.\;\;}}#3} 
\makeatother

\setcounter{tocdepth}{2}

\renewcommand{\theequation}{\arabic{equation}}   



 \title{On equivariant Chern-Weil forms and determinant lines} 
 \author[D. S. Freed]{Daniel S.~Freed}
 \thanks{The work of D.S.F. is supported by the National Science Foundation
under grant DMS-1207817.} 
 \address{Department of Mathematics \\ University of Texas \\ Austin, TX
78712} 
 \email{dafr@math.utexas.edu}
 \date{\today}
 \begin{abstract} 
 A strong from of invariance under a group~$G$ is manifested in a family over
the classifying space~$BG$.  We advocate a differential-geometric avatar
of~$BG$ when $G$~is a Lie group.  Applied to $G$-equivariant connections on
smooth principal or vector bundles, the
$\emph{equivariance}\to\emph{families}$ principle converts the
$G$-equivariant extensions of curvature and Chern-Weil forms to the standard
nonequivariant versions.  An application of this technique yields the
moment map of the determinant line of a $G$-equivariant Dirac operator, which
in turn sheds light on some anomaly formulas in quantum field theory.
 \end{abstract}
\maketitle




Let $X$~be a smooth manifold, $H$~a Lie group, and $P\to X$ a smooth
principal $H$-bundle with connection~$\Theta $.  Suppose that a Lie group~$G$
acts on $P\to X$ preserving~$\Theta $.  If $G$~acts freely, so that there is
a quotient principal $H$-bundle $P/G\to X/G$, then there is an obstruction to
descending~$\Theta $ to the quotient bundle: the moment map.  This
obstruction---defined for not-necessarily-free $G$-actions---is the key
ingredient in a $G$-equivariant extension of the curvature of~$\Theta $, so
too in the $G$-equivariant extension of the Chern-Weil forms~\cite{BV}, which
live in $G$-equivariant de Rham theory.
 
Especially in topology and algebraic geometry a strong form of $G$-invariance
is expressed by fibering over a classifying space~$BG$.  The particular form
of the classifying space varies with context.  Here we advocate in
differential geometry for~$\BNG$, the classifying ``generalized manifold'' of
$G$-connections introduced and studied in~\cite{FH}.  As we review below,
$\BNG$~is a simplicial sheaf on the site of smooth manifolds, a generalized
manifold in the same sense that a distribution is a generalized function.  A
$G$-manifold $X$ has a \emph{differential Borel quotient}, which is a fiber
bundle $\XG\to\BNG$ with fiber~$X$, and we proved~\cite[Theorem~7.28(ii)]{FH}
that the de Rham complex of~$\XG$ is the Weil model for $G$-equivariant de
Rham theory.  Here, given a $G$-equivariant connection~$\Theta $ on $P\to X$,
the strong form of $G$-invariance is a connection~$\TG$ on the differential
Borel quotient $\PG\to\XG$.  Our main theorem identifies the curvature
of~$\TG$ with the $G$-equivariant curvature of~$\Theta $, and similarly the
Chern-Weil forms of~$\TG$ with the $G$-equivariant Chern-Weil forms
of~$\Theta $.  (A less rigid version of this construction was used
in~\cite{BT} to prove that equivariant Chern-Weil forms represent equivariant
characteristic classes.)

The differential Borel quotient converts equivariance into a fiber bundle.
As an application of this technique we prove a theorem about $G$-equivariant
families of Dirac operators.  In~\cite{BF} we constructed a \emph{canonical}
connection on the determinant line bundle of a family and computed a formula
for its curvature.  Attached to a $G$-equivariant family of Dirac operators
we obtain a $G$-equivariant determinant line bundle, and here we compute the
corresponding moment map.  The proof uses the
$\emph{equivariance}\to\emph{families}$ construction to reduce the moment map
computation to the known curvature formula.  Applied to quantum field theory
we find a direct geometric interpretation of standard ``covariant'' anomaly
formulas in the physics literature (e.g.,~\cite{BZ,ASZ,AgG,AgW}).

  \subsection*{Equivariant connections and $\BNG$}

The main construction and computation lie squarely in differential geometry:
no simplicial sheaves required.  Let $\pi \:P\to X$ be a fiber bundle and $W$
a horizontal distribution: $W\subset TP$~is a subbundle and the inclusion
maps induce an isomorphism $W\oplus T(P/X)\cong TP$, where $T(P/X)=\ker \pi
_*$ is the subbundle of vertical vectors.  If $\pi $~is a principal
$H$-bundle for a Lie group~$H$ and $W$~is $H$-invariant, then $W$~is a
connection, but our construction is more general (and we need the general
version in the next section).  Let $G$~be a Lie group which acts on $\pi
\:P\to X$ and preserves the distribution~$W$.  Suppose $Q\to M$ is a
principal $G$-bundle with connection~$V\subset TQ$ a $G$-invariant horizontal
distribution.  Then the distribution $V\oplus W\subset T(Q\times P)$ is
$G$-invariant, so descends to a horizontal distribution~$W_Q$ on the quotient
fiber bundle~$\pi _Q$ in the iterated fibration
  \begin{equation}\label{eq:1}
\begin{gathered}
     \xymatrix@R=4pt{Q\times \mstrut _GP\ar[r]^{\pi _Q\;}\ar@{=}[d]& Q\times
     \mstrut _GX 
     \ar[r]^<<<<{\rho}\ar@{=}[d]& M \\ 
     \PQ&\XQ}
     \end{gathered}
  \end{equation}
The restriction of~$W_Q$ over a fiber of $\rho $ may be identified with~$W$.
The horizontal distribution~$W_Q$ is functorial for maps of $G$-bundles with
connection.
 
In case $\pi \:P\to X$ is a principal $H$-bundle and $W$~a connection, we
compute the curvature of~$W_Q$.  Let $\Theta \in \Omega ^1_P(\mathfrak{h})$
be the connection form with kernel~$W$.  Assume for simplicity that $G$~is
finite dimensional, and fix a basis $\{e_a\}\subset \mathfrak{g}$.  Define
the structure constants~$f_{ab}^c\in\RR$ by $[e_a,e_b]=f_{ab}^ce_c$.  Each
basis element~$e_a$ induces a vector field on~$P$, via the infinitesimal
$G$-action, and so a contraction operator $\iota _a\:\Omega ^{\bullet
}_P\to\Omega ^{\bullet -1}_P$ of degree~$-1$ on differential forms (with
coefficients).  Let $\phi =\phi ^ae_a\in \Omega ^1_Q(\mathfrak{g})$ be the
connection form with kernel~$V$.  Let $\Omega =d\Theta +\frac 12[\Theta
\wedge \Theta ]\in \Omega ^2_P(\mathfrak{h})$ and $\omega =d\phi +\frac
12[\phi \wedge \phi ]\in \Omega ^2_Q(\mathfrak{g})$ be the curvatures
of~$\Theta $ and~$\phi $, respectively.  Write $\omega =\omega ^ae_a$.
Finally, let $\Theta _Q$, $\Omega _Q$ denote the connection and curvature of
the connection~$W_Q$ on the principal $H$-bundle $\pi \mstrut _Q$
in~\eqref{eq:1}.

  \begin{proposition}[]\label{thm:1}
 \ 
 \begin{enumerate}[label=\textnormal{(\roman*)}]

 \item $\Theta _Q=\Theta -\phi ^a\cdot\iota _a\Theta $.

 \item $\Omega _Q=\,\Omega -\phi ^a\wedge \iota _a\Omega + \frac 12\phi
^a\wedge \phi ^b\cdot\iota _b\iota _a\Omega - \omega ^a\cdot\iota _a\Theta $.

 \end{enumerate} 
  \end{proposition}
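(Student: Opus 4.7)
My plan is to prove both parts by direct computation on $Q\times P$, using that $Q\times P\to Q\times_G P$ is a principal $G$-bundle, so forms on the quotient correspond to $G$-basic forms upstairs.

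For (i), I verify that $\tilde\Theta_Q := \Theta - \phi^a\cdot\iota_a\Theta$ on $Q\times P$ is $G$-basic, hence descends. Horizontality for the diagonal $G$-action follows by pairing with the fundamental vector field $(\xi_a^Q,\xi_a^P)$: one has $\phi^b(\xi_a^Q)=\delta^b_a$ and $\Theta(\xi_a^P)=\iota_a\Theta$, so the two contributions cancel. $G$-invariance is immediate from the $G$-invariance of $\Theta$ and the equivariance of $\phi$. To identify the descent with $\Theta_Q$ itself, I check that $\tilde\Theta_Q$ vanishes on $V\oplus W$ (using $\phi|_V=0$ and $\Theta|_W=0$) and restricts to $\Theta$ on $H$-vertical vectors (which sit in $\{0\}\oplus TP$).

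For (ii), the strategy is to evaluate $d\tilde\Theta_Q + \tfrac12[\tilde\Theta_Q\wedge\tilde\Theta_Q]$ directly from (i). Writing $\mu_a := \iota_a\Theta$, Leibniz gives $d\tilde\Theta_Q = d\Theta - d\phi^a\cdot\mu_a + \phi^a\wedge d\mu_a$. The key algebraic inputs are the structure equations $d\Theta = \Omega - \tfrac12[\Theta\wedge\Theta]$ and $d\phi^a = \omega^a - \tfrac12 f_{bc}^a\,\phi^b\wedge\phi^c$, together with Cartan's magic formula applied to $\Theta$: since $G$ preserves $\Theta$ one has $\mathcal{L}_a\Theta = 0$, whence $d\mu_a = -\iota_a d\Theta = -\iota_a\Omega + [\mu_a,\Theta]$, using $\iota_a[\Theta\wedge\Theta]=2[\mu_a,\Theta]$. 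Expanding the bracket directly produces
\[
\tfrac12[\tilde\Theta_Q\wedge\tilde\Theta_Q] = \tfrac12[\Theta\wedge\Theta] - \phi^a\wedge[\mu_a,\Theta] + \tfrac12\phi^a\wedge\phi^b\,[\mu_a,\mu_b].
\]
On adding the two expressions, the $\pm\tfrac12[\Theta\wedge\Theta]$ contributions cancel, and crucially so do the $\pm\phi^a\wedge[\mu_a,\Theta]$ terms.

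What remains is $\Omega - \omega^a\mu_a - \phi^a\wedge\iota_a\Omega$ together with a combination $\tfrac12\phi^a\wedge\phi^b[\mu_a,\mu_b] + \tfrac12 f_{ab}^c\phi^a\wedge\phi^b\mu_c$. To collapse this into the single term $\tfrac12\phi^a\wedge\phi^b\,\iota_b\iota_a\Omega$ of the statement, I apply $\iota_b$ to the already-derived identity $\iota_a\Omega = [\mu_a,\Theta] - d\mu_a$, obtaining $\iota_b\iota_a\Omega = [\mu_a,\mu_b] - \mathcal{L}_b\mu_a$, where $\mathcal{L}_b\mu_a = \iota_{[\xi_b^*,\xi_a^*]}\Theta$ supplies precisely the requisite structure-constant contribution. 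I expect the principal technical obstacle to be sign bookkeeping: keeping the conventions for the infinitesimal action $\mathfrak{g}\to\Gamma(TP)$, the structure constants, and the graded bracket on Lie-algebra-valued forms aligned so that every cancellation comes out with the intended sign.
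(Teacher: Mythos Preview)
Your proposal is correct and follows essentially the same route as the paper. For~(i) the paper simply notes that the right hand side vanishes on $V\oplus W$ and is the identity on $H$-vertical vectors; your additional verification of $G$-basicness is a welcome explicitness but not a different idea. For~(ii) the paper expands $d\Theta_Q+\tfrac12[\Theta_Q\wedge\Theta_Q]$ using exactly the ingredients you list---Cartan's formula $(d\iota_a+\iota_a d)\Theta=0$, the structure equation for $d\phi^a$, and the identity $\iota_b\iota_a\Omega=f^c_{ab}\,\iota_c\Theta+[\iota_a\Theta,\iota_b\Theta]$---so the two arguments are the same computation with the same cancellations; your own caveat about sign conventions for the infinitesimal action is the only place where care is needed.
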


  \begin{proof}
 The form on the right hand side of~(i) vanishes on~$V\oplus W$ and is the
identity on vertical vectors, since $\Theta $~is.  For~(ii) we compute using
$(d\iota _a+\iota _ad)\Theta =0$ by $G$-invariance:
  \begin{equation}\label{eq:2}
     \Omega _Q = \Omega \;-\;d(\phi ^a\cdot \iota _a\Theta ) \;-\;[\phi 
     ^a\cdot \iota _a\Theta \wedge \Theta] \;+\;\frac 12[\phi ^a\cdot \iota
     _a\Theta \wedge \phi ^b\cdot \iota _b\Theta ] 
  \end{equation}
and  
  \begin{equation}\label{eq:3}
     -d(\phi ^a\cdot \iota _a\Theta ) = -\omega ^a\wedge \iota _a\Theta
     \;+\;\frac 
     12f^c_{ab}\,\phi ^a\wedge \phi ^b\cdot \iota _c\Theta \;-\;\phi ^a\wedge
     \iota _a\Omega \;+\; \phi ^a\wedge [\iota _a\Theta , \Theta ]. 
  \end{equation}
The $G$-invariance of~$\Theta $ also implies 
  \begin{equation}\label{eq:4}
     \iota _b\iota _a\Omega =f_{ab}^c\,\iota _c\Theta \;+\;[\iota _a\Theta
     ,\iota _b\Theta ]. 
  \end{equation}
Combine~\eqref{eq:2}, \eqref{eq:3}, and the product of~\eqref{eq:4} with
$\frac 12\phi ^a\wedge \phi ^b$ to conclude.
  \end{proof}

Let $\Man$~denote the category of smooth finite dimensional manifolds and
smooth maps.  A \emph{simplicial presheaf} is a contravariant functor~$\sF$
from~$\Man$ to the category of simplicial sets.  It is a simplicial
\emph{sheaf} if it satisfies a covering condition; see~\cite{FH} and the
references therein.  An object~$M\in \Man$ is a ``test manifold'', and
$\sF(M)$~is the value of the sheaf~$\sF$ on that test manifold.  The sheaves
we consider have values in the category of groupoids: an object in~ $\BNG(M)$
is a principal $G$-bundle $Q\to M$ with connection~$\phi $ and a morphism is
an isomorphism of principal $G$-bundles which preserves the connections.
There is a universal $G$-bundle $\ENG\to\BNG$; an object in~$\ENG(M)$ is a
principal $G$-bundle $Q\to M$ with connection and a section.  A smooth finite
dimensional manifold~$X$ is a sheaf via the Yoneda embedding.  Yoneda also
implies that $\sF(M)$~is the set of maps $M\to\sF$ in the category of
sheaves.  If $X$~carries a left $G$-action, then we form the
\emph{differential Borel construction}
  \begin{equation}\label{eq:5}
     \XG := \ENG\times \mstrut _GX\xrightarrow{\;\;\rho \;\;} \BNG, 
  \end{equation}
a fiber bundle with fiber~$X$.  Therefore, given $\pi \:P\to X$ which is
$G$-equivariant, then \eqref{eq:1} is the pullback of the iterated fiber bundle 
  \begin{equation}\label{eq:6}
     \PG\xrightarrow{\;\;\pi _G\;\;} \XG \xrightarrow{\;\;\rho \;\;} \BNG
  \end{equation}
via the map $M\to\BNG$ represented by the principal $G$-bundle $Q\to M$ with
connection~$\phi $.  If $W$~is a $G$-invariant horizontal distribution on
$\pi $, then the construction at the beginning of this section gives a
horizontal distribution~$W_G$ on~$\pi _G$.  It encodes the ``strong
$G$-invariance'' of~$W$. 
 
The de Rham complex of~$\ENG$ is the \emph{Weil algebra} $\Sym ^{\bullet
}\mathfrak{g}^*\otimes {\textstyle\bigwedge} ^{\bullet }\mathfrak{g}^*$,
which is the free differential graded algebra
on~$\mathfrak{g}^*={\textstyle\bigwedge} ^1\mathfrak{g}^*$.  Let $\{\theta
^a\}\subset {\textstyle\bigwedge} ^1\mathfrak{g}^*$ be the dual basis to
$\{e_a\}\subset \mathfrak{g}$; then $\{\chi ^a=d\theta ^a\}\subset
\Sym^1\mathfrak{g}^*$ is also a basis.  Set $\nu ^a = d\theta ^a + \frac
12f^a_{bc}\theta ^b\theta ^c$.  If $X$~is a smooth $G$-manifold, then the
\emph{Weil model} of equivariant de Rham theory is the basic subcomplex of
$\Sym^{\bullet }\mathfrak{g}^*\otimes {\textstyle\bigwedge} ^{\bullet
}\mathfrak{g}^*\otimes \Omega ^{\bullet }_X$; see ~\cite[\S5]{MQ}, \cite{GS}.
Following H.~Cartan~\cite{C1,C2}, these references also construct a
quasi-isomorphism with the \emph{Cartan model}, the $G$-invariant subcomplex
of $\Sym^{\bullet }\mathfrak{g}^*\otimes \Omega ^{\bullet }_X$ with
differential $d_X - \iota _\xi =d_X - \chi ^a\iota _a$, where $\xi $~is the
$G$-invariant $\mathfrak{g}^*$-valued vector field on~$X$ which expresses the
infinitesimal $G$-action.  The quasi-isomorphism is the augmentation map of
the exterior algebra: it sends $\theta ^a\to0$, $\nu ^a\to\chi ^a$ for
all~$a$.  As quoted earlier from~\cite{FH}, the de Rham complex of~$\XG$ is
the Weil model of equivariant de Rham theory on~$X$.

  \begin{theorem}[]\label{thm:2}
 Let $\pi \:P\to X$ be a principal $H$-bundle with connection~$\Theta $, and
suppose $G$~acts on~$P\to X$ preserving~$\Theta $.  Then the curvature of the
induced connection on $\pi _G\:\PG\to\XG$ is 
  \begin{equation}\label{eq:7}
     \Omega _G=\Omega -\chi ^a\cdot\iota _a\Theta 
  \end{equation}
in the Cartan model.   
  \end{theorem}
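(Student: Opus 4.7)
My plan is to obtain Theorem~\ref{thm:2} by specializing Proposition~\ref{thm:1} to the universal principal $G$-bundle with connection $\ENG\to\BNG$, then transporting the resulting formula from the Weil model to the Cartan model.

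The first step is to recognize that, by the Yoneda-style description of $\BNG$, a general principal $G$-bundle with connection $(Q\to M,\phi)$ over a test manifold is exactly a map $M\to\BNG$, and the iterated fibration \eqref{eq:1} is the pullback of the universal iterated fibration \eqref{eq:6}. Consequently, it suffices to compute the curvature~$\Omega_G$ in the de Rham complex of~$\XG$, i.e., in the Weil model. The ``universal connection'' on $\ENG$ is the tautological one whose connection form has components $\theta^a\in{\textstyle\bigwedge}^1\mathfrak{g}^*$; the corresponding curvature components are
\[
   \omega^a \;=\; \nu^a \;=\; d\theta^a + \tfrac12 f^a_{bc}\theta^b\theta^c
   \;\in\;\Sym^1\mathfrak{g}^*\oplus{\textstyle\bigwedge}^2\mathfrak{g}^*.
\]

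The second step is purely formal: substitute $\phi^a\rightsquigarrow\theta^a$ and $\omega^a\rightsquigarrow\nu^a$ into Proposition~\ref{thm:1}(ii). This gives the Weil-model expression
\[
   \Omega_G \;=\; \Omega \;-\;\theta^a\wedge\iota_a\Omega \;+\;\tfrac12\theta^a\wedge\theta^b\cdot\iota_b\iota_a\Omega \;-\;\nu^a\cdot\iota_a\Theta.
\]
The naturality of the horizontal-distribution construction~$W_Q$ in $(Q,\phi)$ guarantees that this universal formula indeed represents $\Omega_G$, since pulling back along any $M\to\BNG$ recovers the formula of Proposition~\ref{thm:1}(ii) verbatim.

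The third step is to apply the quasi-isomorphism from the Weil model to the Cartan model recalled in the text: $\theta^a\mapsto 0$, $\nu^a\mapsto\chi^a$. The two middle terms vanish because each contains an explicit factor of~$\theta^a$, and the last term becomes $-\chi^a\cdot\iota_a\Theta$, yielding exactly~\eqref{eq:7}.

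I do not expect any serious obstacle beyond bookkeeping; the only conceptual point that needs care is the identification of the universal Weil generators $\theta^a,\nu^a$ with the components $\phi^a,\omega^a$ of the universal connection on $\ENG$, which is precisely the content of the Weil-model description of the de Rham complex of $\XG$ quoted from~\cite{FH}. Once this identification is in place, Theorem~\ref{thm:2} is a direct corollary of Proposition~\ref{thm:1}.
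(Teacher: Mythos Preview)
Your proposal is correct and follows essentially the same route as the paper: write the Weil-model expression by feeding the universal connection~$(\theta^a,\nu^a)$ into Proposition~\ref{thm:1}(ii), justify it via pullback to test manifolds (the Chern--Weil homomorphism and the fact that forms on~$\XG$ are determined by their pullbacks), and then pass to the Cartan model via $\theta^a\mapsto 0$, $\nu^a\mapsto\chi^a$. The paper's proof is the same argument, only more tersely phrased.
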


\noindent
 Let $\mathfrak{g}\mstrut _P\to X$ be the adjoint bundle of Lie algebras;
then $(e_a\mapsto\iota _a\Theta )\in \Hom\bigl(\mathfrak{g},\Omega
^0_X(\mathfrak{g}\mstrut _P)\bigr)$ is the moment map.  $\Omega _G$~is the
$G$-equivariant extension of the curvature defined in~\cite[\S2]{BV}; it is
closed with respect to the covariant Cartan differential $d_\Theta -\iota
_\xi $.

  \begin{proof}
 The corresponding expression in the Weil model is 
  \begin{equation}\label{eq:8}
     \Omega -\theta ^a\wedge \iota _a\Omega + \frac 12\theta ^a\wedge \theta
     ^b\cdot\iota _b\iota _a\Omega - \nu ^a\cdot\iota _a\Theta . 
  \end{equation}
The map $M\to\BNG$ given by $Q\to M$ with connection~$\phi $ induces a
pullback on de Rham complexes known as the \emph{Chern-Weil
homomorphism}~\cite{MQ}.  It sends~\eqref{eq:8} to the curvature~$\Omega _Q$
of the induced connection on~$\pi _G$, by Proposition~\ref{thm:1}, which
proves~\eqref{eq:7} since forms are determined by their pullbacks to test
manifolds.
  \end{proof}

  \subsection*{Equivariant families of Dirac operators}

We translate the main construction~\eqref{eq:1} from principal bundles to
vector bundles.  Let $q\:Y\to T$ be a principal $G$-bundle with
connection~$U\subset TY$, and suppose $E\to Y$ is a vector bundle with a
$G$-action and $G$-invariant covariant derivative~$\nabla $.  Define
$q_*\nabla $ on $E/G\to T$ by
  \begin{equation}\label{eq:12}
     (q_*\nabla )\mstrut _\xi s = \nabla _{\hx}(q^*s), 
  \end{equation}
where $\hx$~is the horizontal lift of the tangent vector~$\xi $ and $s$~is a
section of $E/G\to T$.  Given $\pi \:E\to X$ with $G$-action and
$G$-invariant~$\nabla $, and a $G$-bundle $Q\to M$ with connection~$V$,
apply~\eqref{eq:12} to $Y=Q\times X$, $U=V\oplus TX$ to construct ~$\nabla
_Q$ on $E_Q\to X_Q$ (i.e., $Q\times \mstrut _GE\to Q\times \mstrut _{G}X$).

Recall that if $E\to Y$ is a vector bundle with covariant derivative~$\nabla
$ and $\xymatrix@1@C=16pt{E'\;\,
\ar@{^{(}->}@<.3ex>[r]^i&\;E\ar@{->>}@<.5ex>[l]^p}$ is a complemented
subbundle, then there is a \emph{compressed} covariant derivative
$\comp{E'}{E}\nabla =p\circ \nabla \circ i$ on~$E'\to Y$.  The compression is
transitive for iterated complemented subbundles $\xymatrix@1@C=16pt{E''\;\,
\ar@{^{(}->}@<.3ex>[r]&\;E'\ar@{->>}@<.5ex>[l] \;\,
\ar@{^{(}->}@<.3ex>[r]&\;E\ar@{->>}@<.5ex>[l] }$.

Let $\pi \:X\to S$ be a smooth fiber bundle.  A \emph{relative Riemannian
structure} is a metric~$g^{X/S}$ on $T(X/S)\to X$ together with a horizontal
distribution~$W\subset TX$.  It determines a relative Levi-Civita covariant
derivative~$\nabla ^{X/S}$ on the relative tangent bundle $T(X/S)\to X$ as
follows.  A Riemannian metric~$g^S$ on~$S$ induces a Riemannian metric~$g^X$
on~$X$ which makes~$\pi $ a Riemannian submersion.  Let $\nabla ^X$~be the
Levi-Civita covariant derivative on~$X$.  Then $\nabla
^{X/S}=\comp{T(X/S)}{TX}\nabla ^X$ is independent of~$g^S$.  If the fibers
of~$\pi $ are closed manifolds, and if there is a relative spin structure,
then there is an associated family of Dirac operators.  Quillen~\cite{Q}
constructed a metric on the determinant line bundle $\Det\to S$; it carries a
compatible covariant derivative whose curvature is the 2-form
component of the pushforward of the $\Ahat$~polynomial applied to the
relative curvature~\cite{BF}:
  \begin{equation}\label{eq:9}
     \omega = 2\pi i\left[\int_{X/S}\Ahat(\Omega ^{X/S})\right]_{(2)}
  \end{equation}
There is an extension for families of generalized Dirac operators.
 
Now suppose a Lie group~$G$ acts on $X\to S$ preserving all the data.  (The
``preservation'' of the relative spin structure is additional data.)  Then
there is an induced $G$-action on $\Det\to S$ which preserves the covariant
derivative, and so an equivariant curvature 
  \begin{equation}\label{eq:10}
     \omega _G = \omega - \mu 
  \end{equation}
in the Cartan model, where $\mu \in \Hom(\mathfrak{g},\Omega ^0_S)$ is the
moment map.

  \begin{theorem}[]\label{thm:4}
   \begin{equation}\label{eq:11}
     \omega _G= 2\pi i\left[\int_{X/S}\Ahat(\Omega ^{X/S}_G)\right]_{(2)}
  \end{equation}
  \end{theorem}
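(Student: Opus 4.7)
The plan is to apply the equivariance-to-families principle of the preceding section: convert the $G$-action on the submersion $\pi\:X\to S$ into an ordinary family by passing to the differential Borel quotient, then reduce Theorem~\ref{thm:4} to the non-equivariant curvature formula~\eqref{eq:9}. Form the iterated fiber bundle $\XG\xrightarrow{\pi_G}\SG\xrightarrow{\rho}\BNG$; the $G$-invariance of $g^{X/S}$, $W$, and the relative spin structure on $\pi$ produces corresponding relative data on $\pi_G$, so that for any test map $M\to\BNG$ represented by a principal $G$-bundle $Q\to M$ with connection $\phi$, the pullback $\XQ\to\SQ$ is an ordinary smooth family of closed Riemannian spin manifolds to which \eqref{eq:9} applies. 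In particular, the Quillen determinant line bundle $\Det_Q\to\SQ$ has connection whose curvature is $2\pi i\left[\int_{\XQ/\SQ}\Ahat(\Omega^{\XQ/\SQ})\right]_{(2)}$.

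Next I would establish the vector-bundle analog of Proposition~\ref{thm:1} and Theorem~\ref{thm:2} for the relative tangent bundle $T(X/S)\to X$ with its $G$-invariant relative Levi-Civita covariant derivative~$\nabla^{X/S}$. Using~\eqref{eq:12} with $Y=Q\times X$, together with the transitivity of compression of covariant derivatives, the covariant derivative induced on $T(\XQ/\SQ)\cong Q\times_G T(X/S)$ is the relative Levi-Civita covariant derivative of the Riemannian submersion $\XQ\to\SQ$. The curvature computation of Proposition~\ref{thm:1}, redone for covariant derivatives on vector bundles rather than connection forms on principal bundles---replacing brackets of $\mathfrak{h}$-valued forms by commutators of endomorphisms---identifies $\Omega^{\XQ/\SQ}$ with the image under the Chern-Weil homomorphism of~$Q$ of the Cartan-model equivariant curvature~$\Omega^{X/S}_G$.

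Finally, naturality of the determinant-line construction of~\cite{BF} under pullback of families identifies $\Det_Q\to\SQ$, with its connection, as the pullback along the classifying map of the $G$-equivariant determinant line $\Det\to S$ with its equivariant connection. Hence the curvature of $\Det_Q\to\SQ$ is also the Chern-Weil image of~$\omega_G$. Combining with step one, $\omega_G$ and $2\pi i\left[\int_{X/S}\Ahat(\Omega^{X/S}_G)\right]_{(2)}$ have equal Chern-Weil images on every test $G$-bundle, so they agree in the Cartan model, which is~\eqref{eq:11}. The main obstacle is this last naturality step: one must verify that the construction of~\cite{BF} behaves well enough under the differential Borel construction so that the $G$-equivariant connection on~$\Det\to S$---and in particular its moment map~$\mu$---really is computed by the universal family over~$\BNG$. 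Once this is granted, the theorem reduces to the Chern-Weil identification furnished by the vector-bundle analog of Theorem~\ref{thm:2} together with a direct application of~\eqref{eq:9}.
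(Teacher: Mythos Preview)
Your proposal is correct and follows essentially the same route as the paper: pass to the differential Borel quotient, identify the relative Levi-Civita covariant derivative on $\XQ\to\SQ$ with $\nabla^{X/S}_Q$ via transitivity of compression (the paper packages this step as Lemma~\ref{thm:6}), and then reduce to~\eqref{eq:9} by testing against maps $M\to\BNG$. The naturality of the Bismut--Freed construction that you flag as the main obstacle is simply taken for granted in the paper's one-line conclusion.
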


\noindent
 The equivariant curvature~$\Omega _G^{X/S}$ is~\eqref{eq:7}, and the
integrand is the $G$-equivariant Chern-Weil form associated to the
$\Ahat$~polynomial.  To compute the moment map it suffices to take~$S=\pt$.

  \begin{lemma}[]\label{thm:6}
 Fix a Lie group $G$ and a principal $G$-bundle $Q\to M$ with connection.

 \begin{enumerate}[label=\textnormal{(\roman*)}]

 \item Let $X$~be a Riemannian manifold with Levi-Civita covariant
derivative~$\nabla $.  Suppose $G$~ acts on~$X$ by isometries.  Then
$\nabla _Q$ is the relative Levi-Civita covariant derivative on $\XQ\to M$.

 \item Let $\pi \:X\to S$ be a fiber bundle with relative Riemannian
structure~$\bigl(g^{X/S},W\subset TX\bigr)$; there is an induced relative
Levi-Civita covariant derivative~$\nabla ^{X/S}$.  If $G$~acts preserving all
data, then $\nabla ^{X/S}_Q$ is the relative Levi-Civita covariant derivative
on $\XQ\to\SQ$.

 \end{enumerate}
  \end{lemma}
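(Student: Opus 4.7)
My plan is to observe that (i) is the special case $S = \mathrm{pt}$ of (ii), so I concentrate on (ii). The strategy is to characterize the relative Levi-Civita on $\XQ\to\SQ$ by a Koszul-type identity and to verify that $\nabla^{X/S}_Q$ satisfies it; uniqueness then yields the conclusion. I would first spell out the induced relative Riemannian structure on $\XQ\to\SQ$: the relative metric on $T(X_Q/S_Q)\cong Q\times\mstrut_G T(X/S)$ descends from $g^{X/S}$, and the horizontal distribution $W''\subset TX_Q$ descends from the $G$-invariant subbundle $V\oplus W\subset T(Q\times X)$.

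The characterization I would use is the Koszul identity for a relative Levi-Civita, obtained by projecting the classical Koszul formula for the Levi-Civita of $g^{X/S}+\pi^*g^S$ onto $T(X/S)$ along~$W$: for a vector field $\xi$ on $X$ and vertical sections $s_1, s_2$ of $T(X/S)$, $2g^{X/S}(\nabla^{X/S}_\xi s_1, s_2)$ equals the usual six-term Koszul expression in which every tangent-space argument is replaced by its projection onto $T(X/S)$ along~$W$. This expression manifestly depends only on $g^{X/S}$ and~$W$, recovering the stated independence from~$g^S$, and it uniquely determines $\nabla^{X/S}$.

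To verify the identity for $\nabla^{X/S}_Q$, I would lift everything to $Q\times X$: lift $\xi$ to its $U$-horizontal lift $\tilde\xi$ with $U=V\oplus TX$, and lift $s_i$ to the $G$-equivariant sections $q^*s_i$ of $\mathrm{pr}_2^*T(X/S)$, so that by~\eqref{eq:12} one has $(\nabla^{X/S}_Q)_\xi s_i = \nabla^{X/S}_{\tilde\xi}(q^*s_i)$. The product submersion $Q\times X\to Q\times S$ carries a relative Riemannian structure trivial in the $Q$-direction, its relative Levi-Civita is the pullback of $\nabla^{X/S}$, and the Koszul identity there holds tautologically. The remaining task is to descend this identity term by term through $q\colon Q\times X\to\XQ$: Lie derivatives and Lie brackets descend because $\tilde\xi$ is $q$-related to $\xi$ and $q^*s_i$ to $s_i$, while vertical projection along $TQ\oplus W$ on $Q\times X$ must correspond under $dq$ to vertical projection along~$W''$ on~$\XQ$.

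The main obstacle is precisely this last correspondence of vertical projections. The key observation is that the choice $U=V\oplus TX$ forces the $TQ$-component of $\tilde\xi$ to take values in~$V$; its derivatives in the $X$-directions therefore remain in~$V$, contributing no vertical-in-$Q$ piece, and the bracket $[\tilde\xi, q^*s_i]$ projects cleanly under $dq$ with no correction from the curvature of~$V$. A Riemannian submersion argument from $Q\times X$ to~$\XQ$ cannot substitute here, since $U$ is not the metric-orthogonal of the $G$-action for any natural metric on~$Q\times X$.
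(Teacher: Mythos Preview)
Your approach is sound and reaches the conclusion, but it is organized differently from the paper's.  The paper proves~(i) first and then \emph{deduces}~(ii) from it, rather than treating~(i) as a special case.  For~(i) the paper chooses an auxiliary metric~$g^M$, lifts it to~$g^Q$ so that $V$~is orthogonal to the fibers of $Q\to M$, and checks directly that $q_*\,\comp{V\oplus TX}{TQ\oplus TX}\nabla^{Q\times X}$ is metric-compatible and torsion-free, hence equals~$\nabla^{\XQ}$; compressing to $T(\XQ/M)$ then gives $\nabla^{\XQ/M}=\nabla_Q$ via the transitivity of compression.  Part~(ii) follows in one line from~(i) by compressing once more in the tower $\XQ\to\SQ\to M$: choose an auxiliary~$g^S$, apply~(i) to the Riemannian manifold~$(X,g^X)$, and use that $q_*$ intertwines the compressions $\comp{T(X/S)}{TX}$ and $\comp{T(\XQ/\SQ)}{T(\XQ/M)}$.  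Your direct Koszul verification for~(ii) is more hands-on; the paper's compression formalism packages the same cancellation more abstractly and delivers~(ii) essentially for free once~(i) is established.

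Your final remark is well taken but perhaps overstated.  You are right that $U=V\oplus TX$ is not the metric-orthogonal complement of the $G$-orbits for the product metric on $Q\times X$, so the O'Neill formulas do not apply off the shelf.  The paper nonetheless pushes a Levi-Civita down through~$q$ precisely by verifying metric compatibility and torsion-freeness directly for $q_*\,\comp{V\oplus TX}{TQ\oplus TX}\nabla^{Q\times X}$, which is the same style of argument as your Koszul check and does not need~$U$ to be metric-orthogonal.  In effect, the very observation you isolate---that the $TQ$-component of the horizontal lift lies in~$V$ and hence no vertical-in-$Q$ correction appears---is what makes both routes go through.
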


  \begin{proof}
 Fix a metric~$g^M$ and so an induced~$g^Q$ such that $Q\to M$ is a
Riemannian submersion.  Then $q\:Q\times X\to \XQ$ is a Riemannian submersion
with horizontal distribution $V\oplus TX$.  It is also a principal
$G$-bundle, and that distribution is a connection.  Then the Levi-Civita
covariant derivatives satisfy $\nabla ^{\XQ}=q_*\,\comp{V\oplus TX}{TQ\oplus
TX}\, \nabla ^{Q\times X}$, as follows by checking that the right hand side
preserves the metric and is torsionfree.  Hence the relative Levi-Civita
covariant derivative on $\XQ\to M$ is
  \begin{equation}\label{eq:14}
     \begin{aligned} \nabla ^{\XQ/M} &=
      \comp{T(\XQ/M)}{T\XQ}\,q_*\,\comp{V\oplus TX}{TQ\oplus TX}\, \nabla
      ^{Q\times X} \\[4pt] &= q_*\,\comp{TX}{V\oplus TX}\,\comp{V\oplus
      TX}{TQ\oplus TX}\, \nabla ^{Q\times X} \\[4pt] &= q_*\,\comp{TX}{TQ\oplus
      TX}\, \nabla ^{Q\times X} \\[4pt] &= q_*\,\nabla =\nabla _Q.\end{aligned} 
  \end{equation}
 
For~(ii) consider the iterated fiber bundle $Q\times
X\xrightarrow{\;q\;}\XQ\to \SQ\to M$.  Then using~\eqref{eq:12},
\eqref{eq:14}, and the transitivity of compression, we have
  \begin{equation}\label{eq:15}
     \nabla _Q^{X/S} = q_*\nabla ^{X/S}= q_*\,\comp{T(X/S)}{TX}\nabla =
     \comp{T(\XQ/\SQ)}{T(\XQ/M)}\,q_*\nabla =
     \comp{T(\XQ/\SQ)}{T(\XQ/M)}\,\nabla ^{\XQ/M}= \nabla ^{\XQ/\SQ}. 
  \end{equation}
  \end{proof}

  \begin{proof}[Proof of Theorem~\ref{thm:4}]
 The differential Borel quotient $\XG\to\SG$ inherits the relative Riemannian
metric and, by our basic construction, a horizontal distribution.  The
relative Levi-Civita covariant derivative, determined by its pullbacks~
$\nabla ^{X/S}_Q$ induced by maps $M\to\BNG$, is $\nabla ^{X/S}_G$; this is
Lemma~\ref{thm:6}(ii).  Therefore, its curvature is~$\Omega _G^{X/S}$.  Now
\eqref{eq:11}~follows by testing against maps $M\to\BNG$ and
applying~\eqref{eq:9} to the pullback fiber bundles of smooth manifolds.
  \end{proof}

  \begin{remark}[]\label{thm:5}
 The ``covariant anomaly formula'' (e.g. \cite[(3.60)]{BZ},
\cite[(3.57)]{AgG}) in quantum field theory is the moment map~$\mu $.  In the
cited formulas the base~$S$ is the space of covariant derivatives on a fixed
vector bundle $E\to Y$, and $X=S\times Y$.  The group~$G$ is the infinite
dimensional group of gauge transformations and the anomaly is the obstruction
to descending $\Det\to S$ to the quotient by~$G$, i.e., the moment map
precisely~$\mu $.  Theorem~\ref{thm:4} applies by restricting to finite
dimensional subgroups of~$G$.  Fix a connection in~$S$ with curvature~$F$,
and fix an infinitesimal gauge transformation~$v$.  The equivariant
curvature~\eqref{eq:7} evaluated on~$v$ is $F-v$, and in the
formula~\eqref{eq:11} we use the Chern character in place of the
$\Ahat$~polynomial.  Assuming $\dim X=2n$ we find $\mu = -2\pi i
\left(\frac{i}{2\pi }\right)^n\frac{1}{n!}\int_{X} \tr \,vF^n$, which agrees
with \cite[(3.57)]{AgG} (up to~$2\pi i$).  There is a similar formula for the
``gravitational'' case, in which $S$~is the space of Riemannian metrics on a
fixed manifold~$Y$ and $G$~is the group of diffeomorphisms
(\cite[(5.19)]{BZ}, \cite[(5.32)]{AgG}).  Mathematical treatments of
anomalies often first descend to the quotient~$S/G$, essentially using the
construction ~\eqref{eq:12}, in which case the anomaly is the determinant
line bundle with covariant derivative associated to the family of Dirac
operators over that quotient.  The precise relationship between the moment
map for the $G$-action on the determinant bundle over~$S$ and the determinant
bundle over~$S/G$ ties together different approaches to anomalies.
  \end{remark}

\vskip -40pt\ 

\providecommand{\bysame}{\leavevmode\hbox to3em{\hrulefill}\thinspace}
\providecommand{\MR}{\relax\ifhmode\unskip\space\fi MR }
\providecommand{\MRhref}[2]{%
  \href{http://www.ams.org/mathscinet-getitem?mr=#1}{#2}
}
\providecommand{\href}[2]{#2}

  \end{document}